\titleformat{\subsection}[runin]
{\bfseries} {\thesubsection{.}}{0.15cm}{}[.]
\titleformat{\subsubsection}[runin]
{\em}{\thesubsubsection{.}}{0.15cm}{}[.]
\newtheorem{theorem}{Theorem}[section]
\newtheorem{corollary}[theorem]{Corollary}
\theoremstyle{definition}
\newtheorem{conjecture}[theorem]{Conjecture}
\numberwithin{equation}{section}
\numberwithin{figure}{section}
\newcommand\Cscr{\mathscr{C}}
\newcommand\Oscr{\mathscr{O}}
\newcommand\B{\mathbb{B}}
\newcommand\C{\mathbb{C}}
\newcommand\N{\mathbb{N}}
\newcommand\R{\mathbb{R}}
\renewcommand\b{\mathbb{B}}
\renewcommand\c{\mathbb{C}}
\renewcommand\d{\mathbb D}
\newcommand\n{\mathbb{N}}
\renewcommand\r{\mathbb{R}}
\newcommand\igot{\mathfrak{i}}
\renewcommand\igot{\mathfrak{i}}
\newcommand\ggot{\mathfrak{g}}
\renewcommand\imath{\igot}
\newcommand\hra{\hookrightarrow}
\newcommand\wt{\widetilde}
\newcommand\dist{\mathrm{dist}}
\newcommand\Aut{\mathrm{Aut}}
\def\dist{\mathrm{dist}}
\begin{document}

\fancyhead[LO]{Complete densely embedded complex lines in $\C^2$}
\fancyhead[RE]{A.\ Alarc\'on and F.\ Forstneri\v c} 
\fancyhead[RO,LE]{\thepage}

\thispagestyle{empty}

%% Title
\vspace*{1cm}
\begin{center}
{\bf\LARGE Complete densely embedded complex lines in $\C^2$}

\vspace*{0.5cm}

%% Authors
{\large\bf  Antonio Alarc{\'o}n \; and \; Franc Forstneri{\v c}} 
\end{center}

%% Addresses and finantial support
%\footnote[0]{\vspace*{-0.4cm}
%}
%% Abstract, keywords, and MSC

\vspace*{1cm}

\begin{quote}
{\small
\noindent {\bf Abstract}\hspace*{0.1cm}
In this paper we construct a complete injective holomorphic immersion $\C\to\C^2$ whose image is dense in $\C^2$. 
The analogous result is obtained for any closed complex submanifold $X\subset \C^n$
for $n>1$ in place of $\C\subset\C^2$. We also show that, if $X$ intersects the unit ball $\b^n$ of $\c^n$ 
and $K$ is a connected compact subset of $X\cap\b^n$, then there is a Runge domain $\Omega\subset X$ 
containing $K$ which admits a complete holomorphic embedding $\Omega\to\b^n$ 
whose image is dense in $\b^n$.

\vspace*{0.2cm}

\noindent{\bf Keywords}\hspace*{0.1cm} complete complex submanifold, holomorphic embedding 

\vspace*{0.1cm}

\noindent{\bf MSC (2010):}\hspace*{0.1cm} 32H02; 32E10, 32M17, 53A10}
%
%  FF: these are now good classification numbers
%
%  32H05: Holomorphic mappings, embeddings,...
%  32E10: Stein manifolds
%  32M17: Automorphism groups of Cn and affine manifolds 
%  53A10: Minimal surfaces 
\end{quote}

%%%%%%%%%%
%%%%%%%%%%
%%%%%%%%%%
%%%%%%%%%%
%%%%%%%%%%
%%%%%%%%%%

\vspace{1.5mm}

\section{Introduction} 
\label{sec:intro}

A smooth immersion $f\colon X\to\r^n$ from a smooth manifold $X$ to the Euclidean space $\r^n$ is said to be {\em complete} if the image of every divergent path in $X$ has infinite length in $\R^n$; equivalently, if the metric $f^*(ds^2)$ on $X$ induced by the Euclidean metric $ds^2$ on $\R^n$ is complete. 
%We say that the immersion $f$ is an {\em embedding} if it is an injective map. 
An injective immersion will be called an {\em embedding}.
If $X$ is an open Riemann surface, $n\ge 3$, and $f\colon X\to\r^n$ is a conformal immersion,
then it parametrizes a minimal surface in $\R^n$ if and only if it is a harmonic map. 

A seminal result of Colding and Minicozzi \cite[Corollary 0.13]{ColdingMinicozzi2008AM} 
states that a complete embedded minimal surface of finite topology in $\r^3$ is necessarily proper in $\R^3$;
this was extended to surfaces of finite genus and countably many ends by Meeks, P\'erez, and Ros \cite{MeeksPerezRos-CY}.
This is no longer true for complex curves in $\c^2$ (a special case of minimal surfaces in $\r^4$). 
Indeed, there exist complete embedded complex curves in $\c^2$ with arbitrary topology which are bounded and hence 
non-proper (see \cite{AlarconGlobevnik2016C2}; the case of finite topology was previously shown in 
\cite{AlarconGlobevnikLopez2016Crelle}). Furthermore, every relatively compact domain in $\c$ admits a complete 
non-proper holomorphic embedding into $\c^2$ (see \cite[Corollary 4.7]{AlarconFernandezLopez2013CVPDE}). 
Since all examples in the cited sources are normalized by open Riemann surfaces of {\em hyperbolic} type 
(i.e., carrying non-constant negative subharmonic functions; 
see e.\ g.\ \cite[p.\ 179]{FarkasKra1992Springer}), one is led to wonder whether hyperbolicity plays a role in this context. 
The purpose of this note is to show that it actually does not.
The following is our first main result.

\begin{theorem}\label{th:main}
Given a closed complex submanifold $X$ of $\C^n$ for some $n>1$, there
exists a complete holomorphic embedding $f\colon X\to\C^n$ 
such that $f(X)$ contains any given countable subset of $\c^n$.
In particular, $f(X)$ can be made dense in $\C^n$.
\end{theorem}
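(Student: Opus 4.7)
The plan is to produce $f$ as the uniform-on-compacta limit of a recursively constructed sequence of holomorphic embeddings $f_k\colon X\to\C^n$, achieving at each step three simultaneous objectives: Oka--Weil-type approximation on a growing compactum, interpolation at one more prescribed point, and a lower bound on the intrinsic length of curves that escape to infinity.

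Fix an enumeration $\{a_k\}_{k\in\N}\subset\C^n$ of the prescribed countable set, a sequence of distinct points $\{x_k\}_{k\in\N}\subset X$ chosen to leave every compactum eventually, and an exhaustion of $X$ by smoothly bounded $\Ocal(X)$-convex compacta $K_1\Subset K_2\Subset\cdots$. Fix also a summable sequence $\epsilon_k>0$. Starting from the inclusion $f_0\colon X\hookrightarrow\C^n$, I would inductively construct holomorphic embeddings $f_k\colon X\to\C^n$ satisfying (i) $\|f_k-f_{k-1}\|_{K_{k-1}}<\epsilon_k$; (ii) $f_k(x_j)=a_j$ for $j=1,\ldots,k$; and (iii) the pullback metric $f_k^*(ds^2)$ assigns intrinsic distance at least $1$ between $\di K_{k-1}$ and $\di K_k$.

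Given $f_k$, the inductive step rests on two classical tools on the Stein manifold $X$: the Oka--Weil theorem on the Runge set $K_{k+1}$, and jet interpolation at a finite set of points via Cartan's Theorem B. Combined, these let us deform $f_k$ in a neighbourhood of $K_{k+1}$ and then realize the deformation by a global holomorphic map $X\to\C^n$ that is uniformly close to $f_k$ on $K_k$ and satisfies the interpolation conditions at $x_1,\ldots,x_{k+1}$. The main analytic content is the design of the local deformation inside the shell $K_{k+1}\setminus\mathrm{int}\,K_k$ that produces (iii). For this I would employ a labyrinth argument in the spirit of Jorge--Xavier, adapted to the complex-analytic setting in the authors' earlier work cited in the introduction: the shell is cut by a fine maze of compact arcs, and $f_k$ is perturbed piecewise by a small holomorphic displacement whose differential along these arcs is very large, so that every path crossing the shell acquires length at least $1$ in the new metric. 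Injectivity, only mildly threatened because $X$ has complex codimension at least one in $\C^n$ with $n\geq 2$, is restored by a final generic perturbation compatible with the interpolation data.

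The summability of $\epsilon_k$ yields a holomorphic limit $f\colon X\to\C^n$; Cauchy estimates on derivatives then show that $f$ remains an injective immersion, and (ii) passes to the limit to give $\{a_k\}\subset f(X)$, which is dense if the chosen set is. Completeness follows from (iii): any divergent path in $X$ eventually leaves every $K_k$, and (i) ensures that the length contributions $\geq 1$ across successive shells persist up to a summable error, so the total induced length is infinite. The principal obstacle, and where the bulk of the technical work will lie, is coordinating the three tasks within a single inductive step: the labyrinth perturbation that generates intrinsic length must stay small on $K_{k-1}$ to preserve both approximation and the embedding property, must be compatible with the interpolation condition imposed at $x_{k+1}$, and must leave enough room for the next labyrinth to be built inside the following shell.
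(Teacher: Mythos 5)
Your inductive skeleton (approximation on a growing exhaustion, interpolation at one more point per step, a uniform length gain across each shell, Cauchy estimates to pass immersivity and completeness to the limit) is the same as the paper's, but the mechanism you propose for the inductive step is different and contains a genuine gap. The paper never perturbs $f_{i-1}$ additively: each new map is $f_i=\phi\circ\theta\circ f_{i-1}$ for holomorphic automorphisms $\phi,\theta\in\Aut(\C^n)$ supplied by the Anders\'en--Lempert/Forstneri\v c--Rosay theorem, and the labyrinth lives in the \emph{target}: it is a polynomially convex union of flat balls in a spherical shell $R\,\B\setminus r'\overline\B$ of $\C^n$, off which the automorphism $\theta$ pushes the entire submanifold $f_{i-1}(X)$. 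Completeness then comes for free from Euclidean geometry of the target, for any dimension of $X$. Your version places a Jorge--Xavier-type maze in the source shell $K_{k+1}\setminus\mathrm{int}\,K_k$ and adds a holomorphic displacement that is sup-norm small but has huge differential along the maze. That is an essentially one-dimensional technique: for $\dim_\C X>1$ the Cauchy estimates prevent a holomorphic map from being uniformly small on a neighbourhood of a region while having large derivative on a full-dimensional compact inside it, and it is not explained how to design arcs in a higher-dimensional shell so that every crossing path either meets them or is already long. The theorem is stated for arbitrary closed submanifolds $X\subset\C^n$, so this step does not go through as described.

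The more serious gap is injectivity. You assert it is ``only mildly threatened'' and can be ``restored by a final generic perturbation.'' This fails exactly in the headline case $X=\C\subset\C^2$: there $X$ is a complex hypersurface, and generic injectivity of holomorphic maps $X^k\to\C^n$ under small perturbation requires $2k<n$, which is false for $k=n-1$. Concretely, a transverse double point of an immersed complex curve in $\C^2$ has local intersection index $+1$ and is stable: once an additive perturbation creates one, no subsequent small perturbation removes it. Moreover, your interpolation step $f_{k+1}(x_{k+1})=a_{k+1}$ can itself create a double point if $a_{k+1}$ already lies on the image of $f_k$ away from $x_{k+1}$; the paper must explicitly arrange $a_i\notin\theta(f_{i-1}(X))$ (condition (III)) before sending a point of the image to $a_i$. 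The whole reason the paper works through compositions with global automorphisms is that each $f_i$ is then automatically a \emph{proper holomorphic embedding} of $X$, and injectivity of the limit is preserved by the quantitative stability condition (v) on each $K_{i-1}$. Without replacing your additive perturbations by such automorphisms (or some other device that manifestly preserves global injectivity), the argument does not yield an embedding.
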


By {\em dense} we shall always mean {\em everywhere dense}.
Note that if $f(X)$ is dense in $\C^n$ then $f\colon X\to\c^n$ 
is non-proper. Taking $n=2$ and $X=\C$ gives the following corollary.

\begin{corollary}\label{cor:main}
There is a complete embedded complex line $\C\to\C^2$ with a dense image.
\end{corollary}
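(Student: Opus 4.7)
The plan is to deduce Corollary \ref{cor:main} as a direct specialization of Theorem \ref{th:main}. I would take $n=2$ and let $X = \C\times\{0\} \subset \C^2$, which is a closed complex submanifold biholomorphic to $\C$ via the obvious identification $z\mapsto (z,0)$. To get density, I would fix any countable dense set $D\subset\C^2$, for instance $D=(\Q+\igot\Q)^2$. Theorem \ref{th:main} then produces a complete holomorphic embedding $f\colon X\to\C^2$ with $D\subset f(X)$, and composing with the biholomorphism $\C\to X$ yields a complete holomorphic embedding $\C\to\C^2$ whose image contains $D$ and is therefore dense in $\C^2$.

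There is no genuine obstacle at this stage: the only things to verify are that $\C$ embeds as a closed complex submanifold of $\C^2$ (clear) and that $\C^2$ admits a countable dense subset (clear by separability). All the substantive content, namely the simultaneous achievement of injectivity, completeness of the induced Euclidean metric, and prescription of countably many image points, is carried by Theorem \ref{th:main}. The hard part, of course, lies one level up in proving that theorem, where one must build the embedding by a recursive scheme that stretches the induced metric along a chosen exhaustion of $X$ while hitting the required points of $\C^2$ and preserving injectivity; but for the corollary itself no further construction is required.
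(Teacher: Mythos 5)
Your proposal is correct and matches the paper exactly: the authors derive Corollary \ref{cor:main} from Theorem \ref{th:main} by taking $n=2$ and $X=\C$ (realized as the closed submanifold $\C\times\{0\}\subset\C^2$) together with a countable dense subset of $\C^2$. No further comment is needed.
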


Corollary \ref{cor:main} also holds if $\C$ is replaced by any open Riemann surface admitting
a proper holomorphic embedding into $\C^2$. There are many open {\em parabolic} (i.e., non-hyperbolic) 
Riemann surfaces enjoying this condition; it is however not known whether all open Riemann surfaces do.
For a survey of this classical embedding problem we refer to Sections 8.9 and 8.10 in \cite{Forstneric2011} 
and the paper \cite{ForstnericWold2013APDE}. Without taking care of injectivity, every open Riemann surface 
admits complete dense holomorphic immersions into $\c^n$ for any $n\ge 2$ and complete dense 
conformal minimal immersions into $\r^n$ for $n\ge 3$ (see \cite{AlarconCastro2016Pre1}).

These results provide additional evidence that there is much more
room for conformal minimal surfaces (even those given by holomorphic maps)
in $\R^4=\C^2$ than in $\R^3$. We point out that it is quite easy 
to find injective holomorphic immersions $\C\to\C^2$ which are neither 
complete nor proper. For example, if $a>0$ is irrational then the map $\C\ni z\mapsto (e^z,e^{az})\in\C^2$ 
is an injective immersion, but the image of the negative real axis is a curve of finite length in $\C^2$
terminating at the origin. On the other hand, it is an open problem whether a
conformal minimal embedding $\C\to \R^3$ is necessarily proper; see
\cite[Conjecture 1.2]{FonteneleXavier2010}.

To prove Theorem \ref{th:main}, we use an idea from the recent paper by Alarc\'on, Globevnik, and L\'opez 
\cite{AlarconGlobevnikLopez2016Crelle}. The construction relies on two ingredients. First, in any spherical 
shell in $\C^n$ one can find a  compact polynomially convex set $L$, consisting of finitely many
pairwise disjoint balls contained in affine real hyperplanes,
such that any curve traversing this shell and avoiding $L$ has length bigger than a prescribed constant.
For a suitable choice of $L$ with this property it is then possible to find a holomorphic automorphism
of $\C^n$ which pushes a given complex submanifold $X\subset\C^n$ off $L$.
The construction of such an automorphism uses the main result of the {\em Anders\'en-Lempert theory}. 
In \cite{AlarconGlobevnikLopez2016Crelle} this construction was used to show that every closed complex 
submanifold $X\subset \C^n$ contains a bounded Runge domain
$\Omega$ admitting a proper complete holomorphic embedding
into the unit ball of $\C^n$; furthermore, $\Omega$ can be chosen to contain any given
compact subset of $X$. Clearly, such $\Omega$ carries nonconstant negative 
plurisubharmonic functions and is Kobayashi hyperbolic, so in general one cannot map all of $X$ into the ball. 
We choose instead a sequence of automorphisms which converges uniformly on compacts in $X$
to a complete holomorphic embedding $X\hra \C^n$ whose image contains a prescribed countable set
of points in $\C^n$.

It is natural to ask whether the analogue of Theorem \ref{th:main} holds for more general target manifolds in place of $\C^n$. Since our proof relies on the Anders\'en-Lempert theory  which holds on any Stein manifold $Y$
enjoying Varolin's {\em density property} (the latter meaning that every holomorphic vector field on $Y$ 
can be approximated uniformly on compacts by Lie combinations of $\C$-complete holomorphic vector fields;
see Varolin \cite{Varolin2001} or \cite[Sec.\ 4.10]{Forstneric2011}),
the following is a reasonable conjecture.

\begin{conjecture} 
Assume that $Y$ is a Stein manifold with the density property.
Choose a complete Riemannian metric $\ggot$ on $Y$. 
\begin{enumerate}[\rm (a)] 
\item If $\dim Y\ge 3$ then there exists a $\ggot$-complete holomorphic embedding $\C\to Y$ with a dense image. 
\vspace{1mm}
\item More generally, if $X$ is a Stein manifold, $\dim X< \dim Y$, and there is a proper holomorphic embedding $X\hra Y$, then there exists a $\ggot$-complete injective holomorphic immersion $X \to Y$ with a dense image.
\end{enumerate}
\end{conjecture}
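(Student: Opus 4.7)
The plan is to transplant the Anders\'en--Lempert scheme of Theorem \ref{th:main} from $\C^n$ to a Stein manifold $Y$ with Varolin's density property, with the Euclidean metric replaced by the fixed complete Riemannian metric $\ggot$.

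For part (b), I would start from the given proper holomorphic embedding $\iota\colon X\hookrightarrow Y$ and choose an exhaustion of $Y$ by compact $\Ocal(Y)$-convex sets $K_1\subset K_2\subset\cdots$; write $S_j=K_{j+1}\setminus\mathring{K}_j$. The crucial analytic input needed to replicate the $\C^n$ argument is the following barrier lemma: for each $j$ there exists a compact $\Ocal(Y)$-convex set $L_j\subset S_j$, consisting of finitely many pairwise disjoint pieces of totally real submanifolds of half real dimension, with the property that every continuous path in $S_j\setminus L_j$ which joins the two boundary components of $S_j$ has $\ggot$-length at least $\ell_j$, for a preassigned divergent sequence $(\ell_j)$. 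In $\C^n$ such barriers were built from balls inside affine real hyperplanes; on a general $Y$ they would have to be assembled in holomorphic coordinate charts from totally real discs and then made globally $\Ocal(Y)$-convex.

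The remainder is an inductive Anders\'en--Lempert scheme essentially identical to the proof of Theorem \ref{th:main}. At stage $j$, given a holomorphic embedding $f_j\colon X\to Y$ already containing the first $j$ points of a prescribed countable set $\{p_k\}\subset Y$ in its image, I would invoke the Anders\'en--Lempert theorem for $Y$ (available because of the density property) to produce an automorphism $\phi_j\in\Aut(Y)$ which is (i) uniformly $\ggot$-close to the identity on a specified compact part of $Y$, (ii) pushes $f_j(X)\cap S_j$ entirely off the barrier $L_j$, and (iii) sends one chosen point of $f_j(X)$ to $p_{j+1}$. Setting $f_{j+1}=\phi_j\circ f_j$ and controlling the approximations telescopically, the sequence $(f_j)$ converges uniformly on compacts in $X$ to an injective holomorphic immersion $f\colon X\to Y$ with $\{p_k\}\subset f(X)$; the barriers then force every $\ggot$-divergent path in $X$ to have infinite $f$-image length, giving $\ggot$-completeness. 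Part (a) follows from (b) once one exhibits a proper holomorphic embedding $\C\hookrightarrow Y$ when $\dim Y\ge 3$, which is known for Stein manifolds with the density property.

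The main obstacle is the barrier construction in a general $Y$. In $\C^n$ the whole point was the ambient linear structure: affine real hyperplanes are simultaneously totally real, polynomially convex, and yield the length bound for free by orthogonal projection onto their normal direction. On an abstract Stein manifold with the density property there is no such global flat model, and the totally real pieces must be glued from charts and then made $\Ocal(Y)$-convex via a Stein-neighbourhood and approximation theorem for totally real sets. Arranging simultaneously the convexity condition that drives the Anders\'en--Lempert approximation and the purely metric length lower bound with respect to the abstract Riemannian metric $\ggot$ is where I expect the real difficulty to lie, and is presumably the reason this is stated only as a conjecture.
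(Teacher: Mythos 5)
This statement is labelled a \emph{conjecture} in the paper, and the paper contains no proof of it; there is therefore nothing to compare your argument against, and the only honest assessment is whether your proposal actually closes the problem. It does not, and you yourself identify the reason. The inductive Anders\'en--Lempert scheme you describe is exactly the one the authors use for Theorem \ref{th:main} in $\C^n$, and the reduction of part (a) to part (b) via a proper embedding $\C\hra Y$ for $\dim Y\ge 3$ is legitimate (this is the case $2\dim X+1\le\dim Y$ of the result of Andrist--Forstneri\v c--Ritter--Wold cited in the paper). But the entire construction hinges on the barrier lemma you postulate, and that lemma is precisely the open point. In $\C^n$ the labyrinth of Alarc\'on--Globevnik--L\'opez consists of balls in affine real hyperplanes arranged so that each successive piece lies in a half-space determined by the next hyperplane; this simultaneously gives (1) the Euclidean length lower bound for paths crossing the shell, by projecting onto normal directions, and (2) polynomial convexity of $r\overline\B\cup\wt L_{k_0}$ and of its union with the image of the compact set being approximated, which is what makes the Anders\'en--Lempert approximation applicable. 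On an abstract Stein manifold with the density property neither ingredient is available: there is no global linear structure to produce the length estimate for an arbitrary complete metric $\ggot$ (the shell $K_{j+1}\setminus\mathring K_j$ is in general not contained in a single chart, so one cannot simply transplant the Euclidean labyrinth and compare metrics), and $\Oscr(Y)$-convexity of the union of the barrier with an $\Oscr(Y)$-convex compact set is a genuinely global condition that a chart-by-chart gluing does not deliver. Until that lemma is established, the proposal is a programme rather than a proof, which is consistent with the authors' decision to state the result only as a conjecture.
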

It was recently shown in \cite{Andrist-etall2016} that, if $X$ and $Y$ are as in assertion (b) above and satisfy 
$2\dim X+1\le \dim Y$, then there exists a proper (hence complete) holomorphic embedding $X\hra Y$. Thus, 
for such dimensions we are just asking whether {\em proper} can be replaced by {\em dense}, keeping completeness.

It is known that for any $n>1$ the unit ball $\b^n$ of $\c^n$ contains complete properly embedded complex hypersurfaces 
(see \cite{AlarconLopez2016JEMS,Globevnik2015AM,AlarconGlobevnikLopez2016Crelle} and the references therein); 
this settles in an optimal way a problem posed by Yang in 1977 about the existence of complete bounded complex submanifolds 
of $\c^n$ (see \cite{Yang1977,Yang1977JDG}). Moreover, given a discrete subset $\Lambda\subset\b^2$ there are complete 
properly embedded complex curves in $\b^2$ containing $\Lambda$ (see \cite{Globevnik2016JMAA} for discs and 
\cite{AlarconGlobevnik2016C2} for examples with arbitrary topology).
It remained and open problem whether $\b^n$ also admits complete densely embedded complex submanifolds.
Our second main result gives an affirmative  answer to this question.

%
% Theorem \ref{th:ball}
%
\begin{theorem}\label{th:ball}
Let $X$ be a closed complex submanifold of $\c^n$ for some $n>1$ such that $X\cap \b^n\neq\varnothing$. 
Given a connected compact subset $K\subset X\cap\b^n$, there are a pseudoconvex Runge domain $\Omega\subset X$ containing $K$ and a complete holomorphic embedding 
$f\colon \Omega\to\b^n$ whose image $f(\Omega)$ contains any given countable subset of $\b^n$. 
In particular, $f(\Omega)$ can be made dense in $\b^n$.
\end{theorem}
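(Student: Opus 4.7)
The plan is to marry the Andersén–Lempert iteration scheme behind Theorem \ref{th:main} with the bounded-target construction of \cite{AlarconGlobevnikLopez2016Crelle}. One inductively constructs automorphisms $\Phi_j \in \Aut(\C^n)$, sets $f_j := \Phi_j \circ \cdots \circ \Phi_1|_X$, and takes $f := \lim_j f_j|_\Omega$ on a domain $\Omega \subset X$ to be identified in the construction. The auxiliary data I would fix at the outset: a countable set $\{q_j\}_{j \in \N} \subset \b^n$ (taken dense if the image is to be dense); an exhaustion of $X$ by $\Ocal(\C^n)$-convex compacts $K = M_0 \Subset M_1 \Subset M_2 \Subset \cdots$; a nested sequence of concentric spherical shells $A_j \subset \b^n$ accumulating at $\partial\b^n$; and, within each shell, a polynomially convex compact $L_j$ consisting of finitely many pairwise disjoint closed balls lying in affine real hyperplanes, with the key property from the proof of Theorem \ref{th:main} that every rectifiable arc in $A_j \setminus L_j$ has Euclidean length at least $1$.

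At the $j$-th step, Andersén–Lempert theory would supply $\Phi_j \in \Aut(\C^n)$ that simultaneously (i) approximates the identity within $2^{-j}$ on the polynomially convex hull of $f_{j-1}(M_{j-1}) \cup L_1 \cup \cdots \cup L_{j-1}$; (ii) sends $f_{j-1}(M_j)$ into $\{z : |z| \leq \rho_j\}\setminus L_j$ for some $\rho_j < 1$, via a smooth isotopy of holomorphic injections that pushes the image off $L_j$ while keeping it bounded away from $\partial\b^n$; and (iii) interpolates a preselected point $p_j \in M_j$ to the target $q_j$. Existence of such $\Phi_j$ follows from the Mergelyan-type isotopy arguments used in \cite{AlarconGlobevnikLopez2016Crelle}: the relevant hulls are polynomially convex, and the required motion is realized by the flow of a time-dependent holomorphic vector field approximated by Lie combinations of complete fields. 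Setting $\Omega := \bigcup_j \mathring M_j$, by construction $\Omega$ is a pseudoconvex Runge domain in $X$ containing $K$.

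Condition (i) makes $(f_j)$ Cauchy on compacts of $\Omega$ and, by a Hurwitz-type argument, the limit $f\colon\Omega\to\b^n$ is an injective holomorphic immersion. Density of $f(\Omega)$ is immediate from (iii). Completeness follows from (ii): any divergent path $\gamma$ in $\Omega$ eventually exits every $M_j$, so $f\circ\gamma$ must traverse each shell $A_j$ from some index on and, since the limit image still avoids each $L_j$, each such crossing contributes length at least $1$, forcing the $f$-length of $\gamma$ to be infinite.

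The main obstacle will be coordinating the compacts $M_j$ so that they are strictly increasing, $\Ocal(\C^n)$-convex, and large enough for $\Omega = \bigcup_j \mathring M_j$ to capture a neighborhood of $K$ and remain pseudoconvex and Runge in $X$; simultaneously one has to keep the polynomially convex hull of $f_{j-1}(M_j)\cup L_1 \cup \cdots \cup L_j$ polynomially convex throughout the process and realize the triple interpolation (i)–(iii) in a single Andersén–Lempert step. This polynomial-convexity bookkeeping, already the technical heart of the argument in \cite{AlarconGlobevnikLopez2016Crelle}, is where the main difficulty lies.
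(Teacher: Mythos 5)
Your scheme has a structural flaw that cannot be repaired within the framework you set up. You fix at the outset an exhaustion $K=M_0\Subset M_1\Subset M_2\Subset\cdots$ of \emph{all of} $X$ and put $\Omega=\bigcup_j \mathring M_j$, so that $\Omega=X$; at the same time your condition (ii) forces $f_j(M_j)\subset\rho_j\overline{\B}$ with $\rho_j<1$, and condition (i) makes the tails $\sum_{k>j}2^{-k}$ summable, whence the limit $f$ is a bounded holomorphic map on all of $X$. For $X=\C$ properly embedded as a line in $\C^2$ (the case behind Corollary \ref{cor:ball}), every bounded holomorphic map on $X$ is constant, so no such $f$ can be an embedding. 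This is precisely why Theorem \ref{th:ball} only produces a pseudoconvex Runge domain $\Omega\subset X$ that is in general a \emph{proper} subdomain: the compacts cannot be prescribed in advance as an exhaustion of $X$. In the paper they are chosen adaptively at each stage: after composing with the automorphism $\theta$ that clears the labyrinth, the set $K_i'$ is taken to be (the preimage of) the connected component of $g_i(X)\cap\rho\,\overline{\B}$ containing $g_i(K_{i-1})$, and $\Omega=\bigcup_i K_i$ is simply whatever these connected pieces accumulate to.

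Two further gaps, both secondary to the one above but substantive. First, your completeness argument needs the image of $\partial M_j$ (and of everything outside $M_j$) to lie \emph{beyond} the outer sphere of the shell $A_j$, so that a divergent path leaving $M_j$ is forced to cross $A_j\setminus L_j$; but your condition (ii) pushes $f_{j-1}(M_j)$ \emph{inside} radius $\rho_j$ and says nothing about where the complement of $M_j$ goes, so the crossing is not guaranteed. In the paper this is arranged by \eqref{eq:gibKi}: the adapted compact $K_i'$ is chosen so that $g_i(bK_i')\subset\b\setminus R\,\overline\b$, i.e.\ its boundary sits outside the labyrinth shell while $g_i(K_{i-1})$ sits inside it. Second, realizing the interpolation of a preselected point to $q_j$ while keeping the enlarged compact connected, $\Oscr(X)$-convex, and mapped into the ball is not a routine Anders\'en--Lempert step: the paper needs the combing-hair proposition of Forstneri\v c--Globevnik--Stens\o nes to drag an embedded arc $\gamma$ emanating from $K_i'$ onto an arc inside $\b$ ending at $a_i$, followed by a maximum-principle argument to conclude $f_i(K_i'\cup\gamma)\subset\b$. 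Your proposal does not address how the interpolated point is joined to the rest of the compact without leaving the ball.
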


As above, if $f(\Omega)\subset\b^n$ is dense then the map $f\colon \Omega\to\b^n$ is non-proper. 
Taking $n=2$ and $X=\d:=\{\zeta\in\c\colon |\zeta|<1\}$ we obtain the following corollary.

\begin{corollary}\label{cor:ball}
There is a complete embedded complex disc $\d\to\b^2$ with a dense image.
\end{corollary}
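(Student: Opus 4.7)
The plan is to deduce the statement directly from Theorem \ref{th:ball}. First I would apply that theorem with $n=2$ and with $X$ taken to be the complex line $\c\times\{0\}\subset\c^2$, a closed complex submanifold, so that $X\cap\b^2$ is naturally identified with the unit disc $\d$. Choosing any connected compact subset $K\subset X\cap\b^2$ (for instance a single point), the theorem supplies a pseudoconvex Runge domain $\Omega\subset X$ containing $K$ together with a complete holomorphic embedding $f\colon\Omega\to\b^2$ whose image is dense in $\b^2$.

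Next I would identify $X\cong\c$ and use the classical fact that a Runge subdomain of $\c$ has no bounded complementary component, hence is simply connected. The domain $\Omega$ is also necessarily a \emph{proper} subdomain of $\c$: otherwise $f\colon\c\to\b^2$ would be a bounded entire map, forced by Liouville's theorem to be constant, contradicting the injectivity of $f$. The Riemann mapping theorem therefore furnishes a biholomorphism $\phi\colon\d\to\Omega$, and the composition $F:=f\circ\phi\colon\d\to\b^2$ is an injective holomorphic immersion whose image $F(\d)=f(\Omega)$ is dense in $\b^2$.

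It remains only to verify completeness of $F$. Since $\phi$ is a homeomorphism, any divergent path $\gamma$ in $\d$ is carried to a divergent path $\phi\circ\gamma$ in $\Omega$, and by completeness of $f$ its image in $\b^2$ has infinite Euclidean length; equivalently, the pullback metric $F^*(ds^2)=\phi^*f^*(ds^2)$ is complete on $\d$. Granting Theorem \ref{th:ball}, no step in this reduction presents a serious obstacle; the only small points worth checking are the simple connectivity of a Runge planar domain and the invariance of metric completeness under biholomorphism, both of which are standard.
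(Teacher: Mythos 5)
Your proposal is correct and follows essentially the same route as the paper: Corollary \ref{cor:ball} is deduced from Theorem \ref{th:ball} applied to a complex line in $\c^2$, with the resulting simply connected Runge domain $\Omega\subsetneq\c$ identified with the disc by uniformization. You in fact spell out more explicitly than the paper the steps (Runge planar domain is simply connected, $\Omega\ne\c$ by Liouville, Riemann mapping, and invariance of completeness under biholomorphic reparametrization) that the paper subsumes in its more general Corollary \ref{cor:ball-2}.
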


More generally, it follows from Theorem \ref{th:ball} 
that in $\b^2$ there are complete embedded complex curves with arbitrary finite topology and containing 
any given countable subset. (See Corollary \ref{cor:ball-2}.)
Without taking care of injectivity, given an arbitrary domain (i.e., a connected open subset) $D$ in $\c^n$ $(n\ge 2)$,
on each open connected orientable smooth surface there is a complex structure such that the resulting open 
Riemann surface admits complete dense holomorphic immersions into $D$; moreover, every bordered Riemann 
surface carries a complete holomorphic immersion into $D$ with dense image (see \cite{AlarconCastro2016Pre1}). 
The analogous results for conformal minimal immersions into any domain in $\r^n$ ($n\ge 3$) also hold 
(see \cite{AlarconCastro2016Pre1}). 

The proof of Theorem \ref{th:ball} uses arguments similar to those in the proof of Theorem \ref{th:main}, 
but with an additional ingredient to keep the image of the embedding $f$ inside the ball.

\subsection*{Notation} Given a closed complex submanifold $X$ of $\C^n$ $(n>1)$, a compact set $K\subset X$, 
and a map $f=(f_1,\ldots,f_n) \colon X\to\C^n$,
we write $\|f\|_K=\sup\{|f(x)|:x\in K\}$ where $|f|^2=\sum_{j=1}^n |f_j|^2$.
Denote by $ds^2$ the Euclidean metric on $\C^n$.
Given an immersion $f\colon X\to\C^n$, we denote by $\dist_f(x,y)$ the distance 
between points $x,y\in X$ in the metric $f^*(ds^2)$ on $X$. 
If $K\subset L$ are compact subsets of $X$, we set
\begin{equation}\label{eq:distf}
	\dist_f(K,X\setminus L) = \inf\{\dist_f(x,y): x\in K,\ y\in X\setminus L\}.
\end{equation}

%%%%%%%%%%
%%%%%%%%%%
%%%%%%%%%%
%%%%%%%%%%   PROOF OF THEOREM 1.1
%%%%%%%%%%
%%%%%%%%%%

\section{Proof of Theorem \ref{th:main}} 
\label{sec:proof}  

Let  $X$ be a closed complex submanifold of $\C^n$ for some $n>1$ and let $A=\{a_j\}_{j\in\N}$ be any 
countable subset of $\C^n$. Pick a compact $\Oscr(X)$-convex set $K_0\subset X$ and a number $0<\epsilon_0<1$. 
Let $f_0$ denote the inclusion map $X\hra \C^n$. In order to prove Theorem \ref{th:main}, we shall 
inductively construct the following:
\begin{itemize}
\item[\rm (a)] an exhaustion of $X$ by an increasing sequence of compact $\Oscr(X)$-convex sets
\[
	K_1\subset K'_2\subset K_2 \subset K'_3\subset K_3 \subset \cdots \subset \bigcup_{i=1}^\infty K_i = X
\]
such that $K_{i-1}\subset \mathring K'_{i}$ and $K'_{i}\subset \mathring K_{i}$ hold for all $i\in\N$,
\vspace{1mm}
\item[\rm (b)]
a sequence of proper holomorphic embeddings $f_i\colon X\hra \C^n$ $(i\in\N)$, 
\vspace{1mm}
\item[\rm (c)]  a discrete sequence of points $\{b_i\}_{i\in\N}\subset X$ with $b_i\in K_i$ for every $i\in \N$, and
\vspace{1mm}
\item[\rm (d)] a decreasing sequence of numbers $\epsilon_i>0$,  
\end{itemize}
such that the following conditions hold for every $i\in\N$:
\begin{itemize}
\item[\rm (i)] $\|f_{i}-f_{i-1}\|_{K_{i-1}} <\epsilon_{i-1}$,
\vspace{1mm}
\item[\rm (ii)] $a_j=f_i(b_j)  \in f_i(K_i)$ for $j=1,\ldots,i$ and $f_i(b_j)=f_{i-1}(b_j)$ for $j=1,\ldots,i-1$,
\vspace{1mm}
\item[\rm (iii)] $\dist_{f_i}(K_{i-1},X\setminus K'_i) > 1/\epsilon_{i-1}$ (see \eqref{eq:distf}),
\vspace{1mm}
\item[(iv)] $0<\epsilon_i <\epsilon_{i-1}/2$,
\vspace{1mm}
\item[(v)]  if $g\colon X\to \C^n$ is a holomorphic map such that $\|g-f_i\|_{K_i}< 2\epsilon_i$, then
$g$ is an injective immersion on $K_{i-1}$ and  $\dist_{g}(K_{i-1},X\setminus K_i) > 1/(2\epsilon_{i-1})$.
\end{itemize}

Assume for a moment that sequences with these properties exist. Conditions (a) and (iv) ensure that 
the sequence $f_i$ converges uniformly on compacts in $X$ to a holomorphic map
$f=\lim_{i\to\infty} f_i\colon X\to \C^n$. By (i) and (iv) we have for every $i\in\N$ that
\[
	\|f-f_i\|_{K_i}\le \sum_{k=i}^\infty \|f_{k+1}-f_k\|_{K_i} < \sum_{k=i}^\infty \epsilon_k <2\epsilon_i.
\]
Hence condition (v) implies that $f$ is an injective immersion on $K_{i-1}$ and
\[
	\dist_{f}(K_{i-1},X\setminus K_i) > 1/(2\epsilon_{i-1}). 
\]
Since this holds for every $i\in \N$ and $\sum_i 1/\epsilon_i=+\infty$, 
it follows that $f\colon X\to\C^n$ is a complete injective immersion. Finally, condition (ii) implies
that $f(X)$ contains the set $A=\{a_j\}_{j\in\N}$.
This completes the proof.

Let us now explain the induction. 
We shall frequently use the well known fact that if $g\colon X\hra \C^n$ is a proper holomorphic embedding
and $K\subset X$  is a compact $\Oscr(X)$-convex set,  then the set $g(K)\subset\C^n$ is polynomially convex.

Assume that for some $i\in\N$ we have found maps $f_j$, sets $K'_j\subset K_j$ and numbers 
$\epsilon_j$ satisfying the stated conditions for $j=0,\ldots,i-1$.
The next map $f_i$ will be of the form $f_i=\Phi\circ f_{i-1}$ for some holomorphic
automorphism $\Phi\in\Aut(\C^n)$ which will be found in two steps,
\[
	\Phi=\phi\circ \theta \quad\text{with}\quad \phi,\theta\in\Aut(\C^n).
\]

Let $\B=\B^n$ be the open unit ball in $\C^n$. Choose a number $r>0$ such that 
\[
	f_{i-1}(K_{i-1})\subset r\B,
\]
and then pick numbers $R>r'$ with $r'>r$.
In the open spherical shell $S=R\,\B\setminus r'\overline\B$ we choose a 
labyrinth $L=\bigcup_{k=1}^\infty L_k$ of the type constructed in 
\cite[Theorem 2.5]{AlarconGlobevnikLopez2016Crelle}, i.e., every set $L_k$ is a ball in an
affine real hyperplane $\Lambda_k\subset \C^n$ such that these balls are pairwise
disjoint, the set $\wt L_k = \bigcup_{j=1}^{k}L_j$ is contained in an open half-space determined
by $\Lambda_{k+1}$ for every $k\in\N$, and any path $\lambda\colon [0,1)\to R\, \B\setminus L$ with 
$\lambda(0)\in r'\overline\B$ and $\lim_{t\to 1}|\lambda(t)|=R$ has infinite Euclidean length. 
(Alternatively, we may use a labyrinth of the type constructed by Globevnik in \cite[Corollary 2.2]{Globevnik2015AM}.)
It follows that $\wt L_k\cap r'\overline \B=\varnothing$ and $\wt L_k \cup r'\overline \B$ 
is polynomially convex for every $k\in\N$. 
Fix $k_0\in\N$ big enough such that every path $\lambda\colon [0,1]\to \C^n\setminus \wt L_{k_0}$
with $\lambda(0)\in r' \overline\B$ and $\lambda(1)\in \C^n\setminus R\,\B$ has length
bigger than $1/\epsilon_{i-1}$. Choose a holomorphic automorphism $\theta\in\Aut(\c^n)$ satisfying
the following conditions:
\begin{itemize}
\item[\rm (I)] $|\theta(f_{i-1}(x)) - f_{i-1}(x)| < \min\{\epsilon_{i-1}/2,r'-r\}$ for all $x\in K_{i-1}$,
\vspace{1mm}
\item[\rm (II)] $\theta(a_j) = a_j$ for $j=1,\ldots,i-1$
(note that $a_j=f_{i-1}(b_j)\in f_{i-1}(K_{i-1})$ for $j=1,\ldots,i-1$),
\vspace{1mm}
\item[\rm (III)] $a_i\notin \theta(f_{i-1}(X))$, and 
\vspace{1mm}
\item[\rm (IV)] $\theta(f_{i-1}(X)) \cap \wt L_{k_0}=\varnothing$.
\end{itemize}
Such $\theta$ is found by an application of the Anders\'en-Lempert theory as explained
in \cite[Proofs of Lemma 3.1 and Theorem 1.6]{AlarconGlobevnikLopez2016Crelle}, using the fact that 
the set $f_{i-1}(K_{i-1})\cup \wt L_{k_0}$ is polynomially convex (since $f_{i-1}(K_{i-1})\subset r\overline\B$
and  $r\overline\B \cup \wt L_{k_0}$ is polynomially convex).
The explicit result used in their proof is  \cite[Theorem 2.1]{ForstnericRosay1993}
which is also available in \cite[Theorem 4.12.1]{Forstneric2011}. 

Consider the proper holomorphic embedding $g_i=\theta\circ f_{i-1}\colon X\hra\C^n$. The compact set
\[
	K'_{i}=\{x\in X : |g_i(x)| \le R+1\}
\]
is $\Oscr(X)$-convex and contains $K_{i-1}$ in its interior. By condition (I) we have 
$g_i(K_{i-1})\subset r'\B$, and hence condition (IV) and the choice of $k_0$ imply
\[
	\dist_{g_i}(K_{i-1},X\setminus K'_i) > 1/\epsilon_{i-1}.
\]

Choose a point $b_i\in X\setminus K'_i$. The set $K'_i\cup\{b_i\}$ is then $\Oscr(X)$-convex, 
and hence its image $g_i(K'_i) \cup \{g_i(b_i)\}\subset g_i(X)\subset\C^n$ is polynomially convex.
By the Anders\'en-Lempert theorem (see  \cite[Theorem 2.1]{ForstnericRosay1993}  or \cite[Theorem 4.12.1]{Forstneric2011}) 
we can find an automorphism $\phi\in\Aut(\C^n)$ which approximates 
the identity map as closely as desired on $g_i(K'_i)$, it fixes each of the points
$a_1,\ldots, a_{i-1}\in g_i(K_{i-1})$, and it satisfies $\phi(g_i(b_i))=a_i$.
If the approximation is close enough, then the proper holomorphic embedding  
\[
	f_i=\phi\circ g_i = \phi\circ\theta \circ f_{i-1}\colon X\hra\C^n
\]
satisfies conditions (i), (ii) and (iii) for the index $i$. Indeed, (i) and (ii) are obvious,
and (iii) follows by observing that 
\[
	f_i(K_{i-1})\subset r'\B, \quad f_i(bK'_{i})\subset \C^n\setminus R\,\overline\B,
	\quad \text{and}\ \ f_i(K'_{i}) \cap \wt L_i=\varnothing
\]
provided that $\phi$ approximates the identity sufficiently 
closely on $g_i(K'_{i})$. Thus, any path in $X$ starting in $K_{i-1}$ and ending in 
$X\setminus K'_i$ is mapped by $f_i$ to a path in $\C^n\setminus \wt L_{k_0}$ starting in $r'\B$ and ending in 
$\C^n\setminus R\,\overline \B$, hence its length is bigger than $1/\epsilon_{i-1}$ by the 
choice of $\wt L_{k_0}$. 

We now choose a compact $\Oscr(X)$-convex set $K_i\subset X$ containing $K'_i\cup\{b_i\}$
in its interior. Furthermore, $K_i$ can be chosen as big as desired, thereby ensuring that
the sequence of these sets will exhaust $X$. By choosing $\epsilon_i>0$  small enough we obtain 
conditions (iv) and (v). Indeed, since the sets $K_{i-1}\subset K'_{i}$ are contained
in the interior of $K_i$, uniform approximation on $K_i$ gives approximation in 
the $\Cscr^1$-norm on $K'_{i}$ by the Cauchy estimates. 

This finishes the induction step and hence completes the proof of Theorem \ref{th:main}.

%%%%%%%%%%
%%%%%%%%%%
%%%%%%%%%%
%%%%%%%%%%   PROOF OF THEOREM 1.4
%%%%%%%%%%
%%%%%%%%%%

\section{Proof of Theorem \ref{th:ball} and Corollary \ref{cor:ball}} 
\label{sec:proof-ball} 

We begin with the proof of Theorem \ref{th:ball}.

Let  $X$ be a closed complex submanifold of $\C^n$ for some $n>1$, and
let $f_0\colon X\hra \C^n$ denote the inclusion map. 
Let $K\subset X\cap\b^n$ be a connected compact subset,  and let $A=\{a_j\}_{j\in\N}$ be a countable subset of $\b^n$. 
Pick a compact connected $\Oscr(X)$-convex set $K_0\subset X\cap\b^n$ containing $K$ and  a number $0<\epsilon_0<1$. 
Similarly to what has been done in the proof of Theorem \ref{th:main}, we shall inductively construct the following:
\begin{itemize}
\item[\rm (a)] an increasing sequence of connected compact $\Oscr(X)$-convex subsets of $X$,
\[
	K_1\subset K'_2\subset K_2 \subset K'_3\subset K_3 \subset \cdots 
\]
such that $K_{i-1}\subset \mathring K'_{i}$ and $K'_{i}\subset \mathring K_{i}\subset X$ hold for all $i\in\N$,
\vspace{1mm}
\item[\rm (b)]
a sequence of proper holomorphic embeddings $f_i\colon X\hra \C^n$ $(i\in\N)$, 
\vspace{1mm}
\item[\rm (c)]  a sequence $(b_i)_{i\in\N}\subset X$ without repetition such that $b_i\in K_i$ for every $i\in \N$, and
\vspace{1mm}
\item[\rm (d)] a decreasing sequence of numbers $\epsilon_i>0$,  
\end{itemize}
such that the following conditions hold for every $i\in\N$:
\begin{itemize}
\item[\rm (i)] $\|f_{i}-f_{i-1}\|_{K_{i-1}} <\epsilon_{i-1}$,
\vspace{1mm}
\item[\rm (ii)] $a_j=f_i(b_j)  \in f_i(K_i)$ for $j=1,\ldots,i$ and $f_i(b_j)=f_{i-1}(b_j)$ for $j=1,\ldots,i-1$,
\vspace{1mm}
\item[\rm (iii)] $\dist_{f_i}(K_{i-1},X\setminus K'_i) > 1/\epsilon_{i-1}$  (see \eqref{eq:distf}),
\vspace{1mm}
\item[(iv)] $0<\epsilon_i <\epsilon_{i-1}/2$,
\vspace{1mm}
\item[(v)]  if $g\colon X\to \C^n$ is a holomorphic map such that $\|g-f_i\|_{K_i}< 2\epsilon_i$, then
$g$ is an injective immersion on $K_{i-1}$ and  $\dist_{g}(K_{i-1},X\setminus K_i) > 1/(2\epsilon_{i-1})$, and
\vspace{1mm}
\item[(vi)] $f_i(K_i)\subset \b^n$.
\end{itemize}

The main novelty with respect to the the proof of Theorem \ref{th:main} is condition {\rm (vi)} which 
implies that the connected domain 
\begin{equation}\label{eq:Omega}
	\Omega=\bigcup_{i=1}^\infty K_i  \subset X
\end{equation}
may be a proper subset of $X$. Note that $\Omega$ is pseudoconvex and Runge in $X$ since each set
$K_i$ is $\Oscr(X)$-convex. Granted these conditions, we see as in the proof of Theorem \ref{th:main} 
that the  limit map $f:=\lim_{i\to\infty} f_i\colon \Omega\to\c^n$ exists and is  
a complete holomorphic embedding whose image $f(\Omega)$ contains the countable set $A$;
moreover, we have $f(\Omega)\subset\b^n$ in view of {\rm (vi)}. Thus, to complete the proof of 
Theorem \ref{th:ball} it remains to establish the induction.

\begin{comment}
In order to guarantee {\rm (vi)}, we will apply the combing hair method from \cite{ForstnericGlobevnikStensones1996MA}. 
We also point out that the discrete sequence $\{b_i\}_{i\in\n}$ in {\rm (c)} does not need to be closed in $X$ 
but it will be closed in $\bigcup_{i\in\n} K_i$.

Assuming that such sequences exist then, as in the proof of Theorem \ref{th:main}, we obtain a domain 
$\Omega:=\bigcup_{i=1}^\infty K_i\subset X$, which is Runge in $X$ since each $K_i$ is $\Oscr(X)$-convex,  
and a limit map $f:=\lim_{i\to\infty} f_i\colon \Omega\to\c^n$ which turns out to be a complete holomorphic embedding 
whose image $f(\Omega)$ contains the countable set $A$. Moreover, {\rm (vi)} gives that $f(\Omega)\subset\b^n$. 
Thus, to complete the proof of Theorem \ref{th:ball} it remains to show the induction.
\end{comment}

For the inductive step we assume that for some $i\in\N$ we have already found maps $f_j$, sets $K'_j\subset K_j$, 
and numbers $\epsilon_j>0$ satisfying the stated conditions for $j=0,\ldots,i-1$. 
(This is vacuous for $i=1$.) The next map $f_i$ will be obtained in two steps, each obtained by 
a composition with a suitably chosen holomorphic automorphism of $\C^n$.

Write $\b=\b^n$. By compactness of the set $K_{i-1}$ and property {\rm (vi)} for the index $i-1$ there 
is a number $0<r<1$ such that 
\begin{equation}\label{eq:fi-1Ki-1}
	f_{i-1}(K_{i-1})\subset r\b.
\end{equation}
Pick a number $R\in(r,1)$. Let $L=\bigcup_{k=1}^\infty L_k \subset R\,\B\setminus r\overline \B$ be a labyrinth 
as in the proof of Theorem \ref{th:main}. Set $\wt L_k=\bigcup_{j=1}^k L_k$ for all $k\in\n$.
Pick $k_0\in\n$ such that the length of any path $\lambda\colon [0,1]\to \c^n\setminus \wt L_{k_0}$ with $|\lambda(0)|=r$ 
and $|\lambda(1)|=R$ is bigger than $1/\epsilon_{i-1}$. Reasoning as in the proof of Theorem \ref{th:main}, we find 
a holomorphic automorphism $\theta\in\Aut(\c^n)$ satisfying
\begin{itemize}
\item[\rm (I)] $|\theta(f_{i-1}(x)) - f_{i-1}(x)| < \epsilon_{i-1}/2$ for all $x\in K_{i-1}$,
\vspace{1mm}
\item[\rm (II)] $\theta(a_j) = a_j$ for $j=1,\ldots,i-1$,
\vspace{1mm}
\item[\rm (III)] $a_i\notin \theta(f_{i-1}(X))$, and 
\vspace{1mm}
\item[\rm (IV)] $\theta(f_{i-1}(X)) \cap \wt L_i =\varnothing$.
\end{itemize}
Moreover, by \eqref{eq:fi-1Ki-1} we may choose $\theta$ close enough to the identity on $f_{i-1}(K_{i-1})$ so that
\begin{itemize}
\item[\rm (V)] $g_i(K_{i-1})\subset r\b$, where $g_i:=\theta\circ f_{i-1}\colon X\hra\c^n$.
\end{itemize}
Since $g_i$ is a proper holomorphic embedding, there is a connected compact $\Oscr(X)$-convex set $K_i'\subset X$ 
such that $K_{i-1}\subset \mathring K_i'$ and
\begin{equation}\label{eq:gibKi}
		g_i(bK_i')\subset\b\setminus R\, \overline\b.
\end{equation}
(For example, fixing a number $\rho\in (R,1)$, we may choose $K'_i$ such that $g_i(K_i')$ 
is the connected component of the set $g_i(X)\cap \rho\, \overline\B$ which contains $g_i(K_{i-1})$.)
Properties \eqref{eq:fi-1Ki-1}, {\rm (IV)}, {\rm (V)}, and \eqref{eq:gibKi} ensure that
\begin{equation}\label{eq:distgi}
	\dist_{g_i}(K_{i-1},X\setminus K_i')>1/\epsilon_{i-1}.
\end{equation}
Let $U$ be the connected component of $\b\cap g_i(X)$ containing the set $g_i(K_i')$.
Set $V:=g_i^{-1}(U)\subset X$ and note that $K_i'\subset V$. 
Pick a point $b_i\in V\setminus K_i'$;  then $g_i(b_i) \in U \setminus g_i(K_i')$.
Choose a smooth embedded arc $\gamma\subset V\setminus\mathring K_i'$ having an endpoint $p$ in $K_i'$ 
and being otherwise disjoint from $K_i'$. Then, $g_i(\gamma)$ is an embedded arc in $\b$ 
having $g_i(p)\in g_i(K_i')$ as an endpoint and being otherwise disjoint from $g_i(K_i')$. 
Since the set $\b\setminus g_i(K_i')$ is path connected and  contains the point $a_i$ in view of {\rm (III)}, 
there exists a homeomorphism
\[
	F\colon g_i(K_i'\cup \gamma)\to g_i(K_i')\cup F(g_i(\gamma))\subset\c^n
\] 
which equals the identity on a neighborhood of $g_i(K_i')$ such that the arc $F(g_i(\gamma))$ is 
contained in $\b$, has $g_i(p)$ and $a_i$ as endpoints, and is otherwise disjoint from $g_i(K_i')$.
Since $K_i'$ is $\Oscr(X)$-convex, the set $g_i(K_i')\subset\c^n$ is polynomially convex. In this situation, 
\cite[Proposition, p.\ 560]{ForstnericGlobevnikStensones1996MA} (on {\em combing hair
by holomorphic automorphisms}; see also \cite[Corollary 4.13.5, p.\ 148]{Forstneric2011})
enables us to approximate $F$ uniformly on $g_i(K_i'\cup \gamma)$ by a holomorphic automorphism 
$\phi\in\Aut(\c^n)$ such that 
\begin{equation}\label{eq:phiai}
	\text{$\phi(a_j)=a_j$\ \ for $j=1,\ldots,i-1$ \; and \; $\phi(g_i(b_i))=a_i$.}
\end{equation}
Consider the proper holomorphic embedding
\[
	f_i:=\phi\circ g_i=\phi\circ\theta\circ f_{i-1}\colon X\hra\c^n.
\]
If the approximation of $F$ by $\phi$ is close enough uniformly on $g_i(K_i'\cup \gamma)$ then
the inclusion \eqref{eq:gibKi} and the maximum principle guarantee that 
\[
	f_i(K_i'\cup \gamma)=\phi(g_i(K_i'\cup \gamma))\subset \b.
\] 
Hence there is a connected compact 
$\Oscr(X)$-convex subset $K_i\subset X$ such that $K_i'\cup \gamma\subset \mathring K_i$ and $f_i(K_i)\subset\b$. 
Assuming that the approximation of $F$ by $\phi$ is close enough, the inequality \eqref{eq:distgi} ensures that 
$\dist_{f_i}(K_{i-1},X\setminus K_i')>1/\epsilon_{i-1}$, and so the same holds when replacing $K_i'$ by the 
bigger set $K_i$. Summarizing, the map $f_i$ satisfies conditions {\rm (i)}, {\rm (iii)}, and {\rm (vi)}. Moreover,
conditions {\rm (II)}, \eqref{eq:phiai}, and the fact that $b_i\in \gamma\subset K_i$ guarantee condition {\rm (ii)}. 
Finally, conditions {\rm (iv)} and {\rm (v)} hold provided that $\epsilon_i>0$ is chosen small enough. 

This concludes the proof of Theorem \ref{th:ball}.

Corollary \ref{cor:ball} is a particular case of the following result.

\begin{corollary}\label{cor:ball-2}
Every open connected orientable smooth surface $S$ of finite topology admits a complex structure $J$ 
such that the open Riemann surface $R=(S,J)$ admits a complete holomorphic embedding $f\colon R\hookrightarrow\c^2$ 
whose image $f(R)$ lies in the ball $\b^2$ and contains any given countable subset of $\b^2$.  In particular, 
$f(R)$ can be made dense in $\b^2$.
\end{corollary}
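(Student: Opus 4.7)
The strategy is to deduce Corollary \ref{cor:ball-2} from Theorem \ref{th:ball} applied with $n=2$ to a suitable properly embedded closed complex curve $X\subset\c^2$ that is diffeomorphic to $S$.

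First, I would invoke the known embedding results for open Riemann surfaces of finite topology (as surveyed in \cite{ForstnericWold2013APDE} and Sections 8.9--8.10 of \cite{Forstneric2011}) to obtain a complex structure $J_0$ on $S$ and a proper holomorphic embedding $\iota\colon R_0:=(S,J_0)\hookrightarrow\c^2$. Pick a connected compact subset $K^*\subset S$ which is a strong deformation retract of $S$. Post-composing $\iota$ with a homothety $z\mapsto\epsilon z$ for sufficiently small $\epsilon>0$, I may assume that $\iota(K^*)\subset\b^2$. Set $X:=\iota(R_0)$, a closed complex curve in $\c^2$, and identify $R_0$ with $X$ via $\iota$; in particular $X\cap\b^2$ contains the deformation retract $\iota(K^*)$ of $X$.

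Next, using a strictly subharmonic exhaustion function on $X$, I would enlarge $\iota(K^*)$ to a connected, $\Oscr(X)$-convex compact set $K\subset X\cap\b^2$ which is still a strong deformation retract of $X$ (this is possible because $\iota(K^*)$ meets each end of $X$ and lies in the open set $X\cap\b^2$). Applying Theorem \ref{th:ball} to $X$ and $K$ yields a pseudoconvex Runge domain $\Omega\subset X$ with $K\subset\Omega$ and a complete holomorphic embedding $f\colon\Omega\to\b^2$ whose image contains any prescribed countable subset of $\b^2$.

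The crucial point is that $\Omega$ is diffeomorphic to $S$. Inspecting the construction in the proof of Theorem \ref{th:ball}, the domain $\Omega=\bigcup_i K_i$ arises as an increasing union of connected $\Oscr(X)$-convex compact subsets of $X$, each containing the initial set $K$. On an open Riemann surface of finite topology, any connected $\Oscr(X)$-convex compact subset that contains the deformation retract $K$ is itself a strong deformation retract of $X$: its complement in $X$ has no relatively compact components and, since it meets every annular end of $X$, consists of exactly one non-compact component per end. Therefore every $K_i$, and hence their union $\Omega$, is diffeomorphic to $X$ and hence to $S$. Pulling back the complex structure on $\Omega$ via a diffeomorphism $S\to\Omega$ furnishes the desired complex structure $J$ on $S$, completing the argument.

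The main obstacle is the final topological verification, which rests on the rigidity of $\Oscr(X)$-convex exhaustions of a finite-topology Riemann surface beyond a fixed deformation retract. Beyond this observation, the corollary is essentially a direct application of Theorem \ref{th:ball}.
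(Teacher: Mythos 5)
Your proposal is correct and follows essentially the same route as the paper: embed $(S,J_0)$ properly into $\c^2$ via the known embedding theorems, rescale by a homothety so that a compact strong deformation retract $K$ of the image $X$ lies in $\b^2$, apply Theorem \ref{th:ball}, and conclude that the resulting Runge domain $\Omega\supset K$ is homeomorphic to $X$ and hence to $S$. The only cosmetic difference is that you verify the final topological point at the level of the individual $\Oscr(X)$-convex sets $K_i$ exhausting $\Omega$, whereas the paper argues directly from $\Omega$ being Runge in $X$ and containing the deformation retract $K$.
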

\begin{proof}
Let $S$ be an open connected orientable smooth surface of finite topology, and let $A\subset\b^2$ be a countable subset. 
Let $J_0$ be a complex structure on $S$ such that the open Riemann surface $R_0=(S,J_0)$ admits a proper holomorphic 
embedding $\phi\colon R_0\hra \c^2$; such $J_0$ exists by \cite{CerneForstneric2002MRL}
(see also \cite{AlarconLopez2013JGA} for the case of surfaces of infinite topology). 
Up to composing with an homothety we may assume that all the topology of $X:=\phi(R_0)$ is contained in $\b^2$, 
meaning that $X\cap\b^2$ is homemorphic to $X$ and $X\setminus \b^2$ consists of finitely many pairwise disjoint 
closed annuli, each one bounded by a Jordan curve in $b\b^2=\{z\in\c^2\colon |z|=1\}$. Theorem \ref{th:ball} applied to 
$X\subset\c^2$ and any compact subset $K$ of $X\cap\b^2$ which is a strong deformation retract of $X$ gives a 
Runge domain $\Omega\subset X$ containing $K$ and a complete holomorphic embedding $f\colon \Omega\to\b^2$ 
with $A\subset f(\Omega)$. Since $K\subset \Omega$, $K$ is homeomorphic to $X$, and $\Omega$ is Runge in $X$, 
we have that also $\Omega$ is homeomorphic to $X$, and hence to $R_0=(S,J_0)$. Thus, there is a complex structure 
$J$ on $S$ such that $R=(S,J)$ is diffeomorphic to $\Omega$. The open Riemann surface $R$ and the complete 
holomorphic embedding $f\colon R\to\c^2$ satisfy the conclusion of the corollary.
\end{proof}

%%%%%%%%%%
%%%%%%%%%%
%%%%%%%%%%
%%%%%%%%%%   THANKS
%%%%%%%%%%
%%%%%%%%%%

\subsection*{Acknowledgements}
A.\ Alarc\'on is supported by the Ram\'on y Cajal program of the Spanish Ministry of Economy and Competitiveness
and by the MINECO/FEDER grant no. MTM2014-52368-P, Spain. 
F.\ Forstneri\v c is partially  supported  by the research program P1-0291 and the research grant 
J1-7256 from ARRS, Republic of Slovenia. 

%%%%%%%%%%
%%%%%%%%%%
%%%%%%%%%%
%%%%%%%%%%   THE BIBLIOGRAPHY
%%%%%%%%%%
%%%%%%%%%%

%{\bibliographystyle{abbrv} \bibliography{bibAFL}}

%%%%%%%%%%
%%%%%%%%%%
%%%%%%%%%%
%%%%%%%%%%   AFFILIATIONS
%%%%%%%%%%
%%%%%%%%%%

% \newpage 

\vspace*{0.5cm}

\noindent Antonio Alarc\'{o}n

\noindent Departamento de Geometr\'{\i}a y Topolog\'{\i}a e Instituto de Matem\'aticas (IEMath-GR), Universidad de Granada, Campus de Fuentenueva s/n, E--18071 Granada, Spain

\noindent  e-mail: {\tt alarcon@ugr.es}

\vspace*{0.5cm}
\noindent Franc Forstneri\v c

\noindent Faculty of Mathematics and Physics, University of Ljubljana, Jadranska 19, SI--1000 Ljubljana, Slovenia.

\noindent Institute of Mathematics, Physics and Mechanics, Jadranska 19, SI--1000 Ljubljana, Slovenia.

\noindent e-mail: {\tt franc.forstneric@fmf.uni-lj.si}

\end{document}